\theoremstyle{plain}
\newtheorem{theorem}{Theorem}[section]
\theoremstyle{definition}
\newtheorem{definition}{Definition}[section]
\newtheorem{lemma}[definition]{Lemma}
\newtheorem{remark}[definition]{Remark}
\newtheorem{corollary}[definition]{Corollary}
\begin{document}

\title{A Proof of Basic Limit Theorem of Renewal Theory
\footnote{MSC: Primary 60K05; Secondary 40E05, 40A05;}
}
\markright{A Proof of Basic Limit Theorem of Renewal Theory}
\author{Toshihiro Koga\thanks{toshihiro\_f\_ma\_mgkvv@t01.itscom.net}}

\maketitle

\begin{abstract}
Let $\{q_n\}_{n=0}^\infty\subset [0,1]$ satisfy $q_0=0$, $\sum_{n=0}^\infty q_n=1$, 
and $\gcd\{n\geq 1\mid q_n\neq 0\}=1$. 
We consider the following process: Let $x$ be a real number. We first set $x=0$. 
Then $x$ is increased by $i$ with probability $q_i~(i=0,1,2,\cdots)$ every time. 
For $n\geq 0$, let $p_n$ be the probability such that $x=n$ occurs, 
so we have $p_0=1$ and $p_n=q_1p_{n-1}+q_2p_{n-2}+\cdots+q_np_0~(n\geq 1)$. 
In this setting, we have $\lim_n p_n=1/\sum_{i=0}^\infty iq_i$, 
where we define $1/\sum_{i=0}^\infty iq_i=0$ if $\sum_{i=0}^\infty iq_i=+\infty$. 
This result is known as (discrete case of) Blackwell's renewal theorem. 
The proof of $\lim_n p_n=1/\sum_{i=0}^\infty iq_i$ is not trivial, 
while the meaning of $\lim_n p_n=1/\sum_{i=0}^\infty iq_i$ is clear 
since the expected value of increasing number $i$ is $\sum_{i=0}^\infty iq_i$. 
Many proofs of this result have been given. 
In this paper, we will also provide a proof of this result. 
The idea of our proof is based on Fourier-analytic methods 
and Tauberian theorems for almost convergent sequences, 
while we actually need only elementary analysis. 
\end{abstract}

\section{Introduction.}
In renewal theory, the following theorem was first proved by \cite{erdos1949theorem}, 
and was called Basic Limit Theorem (e.g., \cite{feller1}): 
\begin{theorem}\label{main_theo}
Let $q:\Zp\to \R$ satisfy 
\begin{align}\label{q_defi}
q_n\geq 0~(n\geq 0),~q_0=0,~\sum_{n=0}^\infty q_n=1,~\gcd\mk{n\geq 1\mid q_n\neq 0}=1. 
\end{align}
Let $p:\Zp\to \R$ be defined as 
\begin{align}\label{p_defi}
p_0:=1,~p_n:=\sum_{i=0}^n q_ip_{n-i}=\sum_{i=1}^n q_ip_{n-i}~(n\geq 1). 
\end{align}
Then $\lim_n p_n=1/\sum_{i=0}^\infty iq_i$, 
where we define $1/\sum_{i=0}^\infty iq_i=0$ if $\sum_{i=0}^\infty iq_i=+\infty$. 
\end{theorem}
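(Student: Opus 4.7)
The plan is to combine a generating-function manipulation with boundary analysis on the unit circle, and then invoke a Tauberian theorem for almost-convergent sequences to finish. Set $\mu:=\sum_{i\geq 0} iq_i$ and introduce the power series $P(z):=\sum_{n\geq 0} p_n z^n$ and $Q(z):=\sum_{n\geq 0} q_n z^n$ on $|z|<1$. A short induction on the recurrence (\ref{p_defi}) gives $0\leq p_n\leq 1$, so both series converge on the open disk, and (\ref{p_defi}) becomes the clean identity $P(z)(1-Q(z))=1$.

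Before attempting to prove convergence, I would first pin down what the limit \emph{must} be. Introducing the tails $r_n:=\sum_{i>n}q_i$, a direct computation gives $\sum_{n\geq 0}r_nz^n=(1-Q(z))/(1-z)$ and hence the convolution identity
\begin{equation*}
\sum_{k=0}^n p_k\, r_{n-k} \;=\; 1 \qquad (n\geq 0),
\end{equation*}
together with $\sum_{n\geq 0}r_n=\mu$. Boundedness of $(p_n)$ combined with this identity already fixes the candidate limit: if $p_n\to L$ and $\mu<\infty$, dominated convergence forces $L\mu=1$; if $\mu=+\infty$, the same identity forces $L=0$. The entire problem therefore reduces to showing that $(p_n)$ does converge.

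For convergence I would analyse $P$ on the boundary of the disk. The aperiodicity assumption $\gcd\{n\geq 1\mid q_n\neq 0\}=1$ yields $|Q(e^{i\theta})|<1$ for every $\theta\in(0,2\pi)$, so $1-Q(e^{i\theta})$ vanishes on the unit circle only at $\theta=0$, where (when $\mu<\infty$) a term-by-term expansion of $\sum q_n(1-e^{in\theta})$ gives $1-Q(e^{i\theta})=-i\mu\theta+o(\theta)$. A Fejér-type summation argument applied to the Fourier series of $P$ should then deliver the block-Cesàro statement
\begin{equation*}
\frac{1}{N+1}\sum_{k=m}^{m+N} p_k \;\xrightarrow[N\to\infty]{}\; \frac{1}{\mu}
\end{equation*}
uniformly in $m\geq 0$; that is, $(p_n)$ is almost convergent to $1/\mu$ in the sense of Lorentz.

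The final step is to promote almost convergence to ordinary convergence. I would prove and apply a Tauberian theorem asserting that a bounded almost-convergent sequence which also satisfies an appropriate slow-oscillation condition is genuinely convergent, and verify that condition for $(p_n)$ directly from the recurrence: differencing (\ref{p_defi}) gives $p_{n+1}-p_n=\sum_{i\geq 1}q_i(p_{n+1-i}-p_{n-i})$, whose averaged magnitude is controlled by the tails $r_i$ and is therefore $o(1)$ in the sense required. Isolating precisely the right Tauberian hypothesis and verifying it from (\ref{p_defi}) by purely elementary real analysis is the step I expect to be the main obstacle. The case $\mu=+\infty$ is handled along the same lines, with the divergence $\sum r_n=+\infty$ fed into the convolution identity to squeeze $p_n$ down to $0$.
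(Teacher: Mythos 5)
Your outline follows the same skeleton as the paper --- the identity $P(z)(1-Q(z))=1$, the tail convolution $\sum_{k=0}^n p_k r_{n-k}=1$ with $\sum_n r_n=\mu$, almost convergence of $(p_n)$ to $1/\mu$, and a Tauberian step to upgrade this to genuine convergence (this is exactly the content of the paper's Lemma \ref{dep_lemm} and the remark following it). The identification of the candidate limit and the almost-convergence part are sound. But the proof has a genuine gap precisely at the step you yourself flag as the main obstacle: verifying the Tauberian condition. The condition that works here is the pointwise statement $p_n-p_{n-1}\to 0$, and your proposed verification --- differencing the recurrence to get $p_{n+1}-p_n=\sum_i q_i(p_{n+1-i}-p_{n-i})$ (plus a forcing term $q_{n+1}$ you dropped) and arguing that its ``averaged magnitude'' is controlled by the tails --- cannot succeed as stated. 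The differenced recurrence and the tail bounds hold verbatim without the aperiodicity hypothesis, yet for $q_2=1$ one gets $p_n=1,0,1,0,\dots$, which is almost convergent to $1/2$, has averaged differences tending to $0$, and does not converge. So either you mean the pointwise condition $\Delta[p]_n\to 0$, in which case the sketched derivation proves nothing (it is essentially as hard as the theorem itself and must use $\gcd=1$ somewhere), or you mean an averaged condition, which is not Tauberian for almost convergence.

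The paper closes this gap with Fourier analysis on the boundary, and this is where the real work lies. When $\mu<\infty$, $1/f_Q$ extends continuously to the closed disk and $2\pi\Delta[p]_m=\int_{-\pi}^{\pi}e^{-im\theta}/f_Q(e^{i\theta})\,d\theta$, so Riemann--Lebesgue gives $\Delta[p]_m\to 0$ (Lemmas \ref{pqQ_lemm} and \ref{fourier_lemm}). When $\mu=+\infty$ this fails --- $1-Q(e^{i\theta})$ no longer behaves like $-i\mu\theta$ near $\theta=0$, and your expansion is explicitly restricted to $\mu<\infty$ --- so the paper instead shows that $(1-e^{i\theta})^2/(1-f_q(e^{i\theta}))$ extends continuously by $0$ at $\theta=0$, deduces $\Delta^2[p]_m\to 0$, and then needs the separate elementary Tauberian Lemma \ref{a_lemm} (bounded $a$ with $\Delta^2[a]_n\to 0$ has $\Delta[a]_n\to 0$) to descend to first differences. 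Your plan for the infinite-mean case (``feed $\sum r_n=+\infty$ into the convolution identity to squeeze $p_n$ to $0$'') only reproves almost convergence to $0$; it does not supply the missing Tauberian condition. To repair the proposal you would need to replace the ``verify it from the recurrence'' step with an actual boundary/Riemann--Lebesgue argument (or an equivalent coupling or Choquet--Deny argument), treating the finite- and infinite-mean cases separately.
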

Later, Theorem \ref{main_theo} was generalized to Blackwell's renewal theorem (BRT)
\cite{blackwell1948}, \cite{blackwell1953}. 
Today, many proofs of BRT are known 
(elementary methods, Fourier-analytic methods, probabilistic methods, and so on). 
Some references can be found in \cite{dippon2005simplified}, 
where \cite{dippon2005simplified} itself provides a proof of BRT. 
In this paper, we also provide another proof of (discrete case of) BRT, i.e., 
we give a proof of Theorem \ref{main_theo}. 
In fact, the author already provided a proof of Theorem \ref{main_theo} in \cite{koga2021tauberian}. 
The proof of \cite{koga2021tauberian} requires delicate analysis of Wiener-Ikehara Tauberian theorem, 
while the proof we provide here is based on Fourier-analytic methods 
and Tauberian theorems for almost convergent sequences. 
As for this topic, we also refer to \cite{pinsky1976note}: 
Theorem \ref{main_theo}, with additional conditions  
$\sum_{i=0}^\infty iq_i<+\infty$ and $\sum_{i=1}^\infty (i\log i)q_i<+\infty$, 
is proved in \cite{pinsky1976note} via elementary Fourier-analytic method. 
The general case can not be obtained by the method in \cite{pinsky1976note}, 
but if we combine it with Tauberian methods for almost convergent sequences, 
then we obtain the general case, i.e., the entire Theorem \ref{main_theo}, as described below. 

Throughout this paper, let $q:\Zp\to \R$ always satisfy (\ref{q_defi}), 
and $p:\Zp\to \R$ always satisfy (\ref{p_defi}). We also define $Q:\Zp\to [0,1]$ as 
$Q_n:=\sum_{i=n+1}^\infty q_i\in [0,1]~~(n\geq 0)$. 

\section{Basic properties. }
In this section, we provide basic properties of $q$, $p$, $Q$. 

\begin{definition}
For $a,b:\Zp\to \C$, we define $a*b:\Zp\to \C$ as \\
$(a*b)_n:=\sum_{i=0}^n a_ib_{n-i}~(n\geq 0)$. 
For $a:\Zp\to \C$, we define $\Delta[a]:\Zp\to \C$ as $\Delta[a]_0:=a_0,~\Delta[a]_n:=a_n-a_{n-1}~(n\geq 1)$. 
We also define $\Delta^k[a]:\Zp\to \C~(k\geq 0)$ as $\Delta^0[a]:=a,~\Delta^k[a]:=\Delta[\Delta^{k-1}[a]]~(k\geq 1)$. 
Next, we define $I:\Zp\to \C$ as $I_0:=1,~I_n:=0~(n\geq 1)$. 
We also define $1:\Zp\to \C$ as $1_n:=1~(n\geq 0)$. 
Next, $a:\Zp\to \C$ is said to be bounded iff $|a_n|\leq C~(\forall n\geq 0)$ for some constant $C>0$. 
\end{definition}

\begin{lemma}
We have $0\leq p_n\leq 1~(n\geq 0)$ and $p=I+p*q$. 
Moreover, $\sum_{i=0}^\infty Q_i=\sum_{i=0}^\infty iq_i$, $(I-q)*1=Q$, and $p*Q=1$. 
\end{lemma}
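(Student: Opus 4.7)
The plan is to dispatch the five claims in order, since each either follows by direct recursion/induction or reduces to a short convolution manipulation.

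First, for $0\leq p_n\leq 1$, I would use strong induction on $n$. The base $p_0=1$ is given. For $n\geq 1$, the recursion $p_n=\sum_{i=1}^n q_ip_{n-i}$ together with $q_i\geq 0$ and the inductive hypothesis gives $p_n\geq 0$ and $p_n\leq \sum_{i=1}^n q_i\leq \sum_{i=0}^\infty q_i=1$. For the identity $p=I+p*q$, I would simply unfold the convolution at each $n$: at $n=0$, use $q_0=0$ to see $(p*q)_0=0$ so both sides equal $1$; at $n\geq 1$, commutativity of convolution gives $(p*q)_n=\sum_{i=0}^n q_ip_{n-i}$, which equals $\sum_{i=1}^n q_ip_{n-i}=p_n$ by (\ref{p_defi}) and $q_0=0$, while $I_n=0$.

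For $\sum_{i=0}^\infty Q_i=\sum_{i=0}^\infty iq_i$, I would apply Tonelli (all terms nonnegative) to swap the double sum
\begin{align*}
\sum_{i=0}^\infty Q_i=\sum_{i=0}^\infty\sum_{j=i+1}^\infty q_j=\sum_{j=1}^\infty q_j\sum_{i=0}^{j-1}1=\sum_{j=0}^\infty jq_j,
\end{align*}
so both sides agree, including the case where either diverges to $+\infty$. For $(I-q)*1=Q$, I would evaluate pointwise: $((I-q)*1)_n=I_n-\sum_{i=0}^n q_i=1-\sum_{i=0}^n q_i=\sum_{i=n+1}^\infty q_i=Q_n$, using that $\sum_{i=0}^\infty q_i=1$ and treating $n=0$ and $n\geq 1$ uniformly.

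Finally, for $p*Q=1$, I would chain the two previous identities together, invoking the (easy to check) commutativity and associativity of convolution on sequences $\Zp\to\C$:
\begin{align*}
p*Q=p*(I-q)*1=(p-p*q)*1=I*1=1,
\end{align*}
where the last equality uses $p=I+p*q$, hence $p-p*q=I$, and $(I*1)_n=\sum_{i=0}^n I_i=1$ for every $n$. I do not foresee a genuine obstacle here: the only point that requires a moment's care is permitting $\sum iq_i=+\infty$ in the third identity, which Tonelli handles automatically. Everything else is formal manipulation in the convolution algebra, which I would either justify in a preliminary sentence or take as known.
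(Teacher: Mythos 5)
Your proof is correct and follows essentially the same route as the paper: induction for the bounds, pointwise unfolding for $p=I+p*q$ and $(I-q)*1=Q$, rearrangement of the nonnegative double sum for $\sum_i Q_i=\sum_i iq_i$, and the chain $p*Q=p*(I-q)*1=I*1=1$. The only blemish is the intermediate expression $((I-q)*1)_n=I_n-\sum_{i=0}^n q_i$, where $I_n$ should read $\sum_{i=0}^n I_i=1$; your next equality shows this is just a typo.
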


\begin{proof}
Since $p_0=1$ and $p_n=\sum_{i=1}^n q_ip_{n-i}~(n\geq 1)$, 
we can easily show $0\leq p_n\leq 1~(n\geq 0)$ by induction on $n\geq 0$. 
Next, we can easily verify $p=I+p*q$. 
Next, $\sum_{i=0}^\infty Q_i=Q_0+Q_1+Q_2+\cdots=(q_1+q_2+q_3+\cdots)+(q_2+q_3+\cdots)+(q_3+\cdots)
=1q_1+2q_2+3q_3+\cdots=\sum_{i=1}^\infty iq_i=\sum_{i=0}^\infty iq_i$. 
Next, we have 
$((I-q)*1)_n=\sum_{i=0}^n (I-q)_i1_{n-i}=\sum_{i=0}^n (I-q)_i=(I_0-q_0)+\sum_{i=1}^n (-q_i)=1-\sum_{i=1}^nq_i
=\sum_{i=1}^\infty q_i-\sum_{i=1}^nq_i=\sum_{i=n+1}^\infty q_i=Q_n~(n\geq 0)$, 
so $(I-q)*1=Q$. Since $p=I+p*q$, we have $p*(I-q)=p*I-p*q=p-p*q=I$, i.e., 
$p*(I-q)=I$, so $p*(I-q)*1=I*1=1$. Thus we obtain $p*Q=1$. 
\end{proof}

\section{Tauberian methods.}
In this section, we provide some crucial lemmas. 
The idea of following lemmas is based on Tauberian theorems for almost convergent sequences. 

\begin{lemma}\label{dep_lemm}
If $\lim_n \Delta[p]_n=0$, then we have $\lim_n p_n=1/\sum_{i=0}^\infty iq_i$. 
\end{lemma}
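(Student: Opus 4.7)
Write $M := \sum_{i=0}^\infty i q_i \in [1,+\infty]$ and let $L := 1/M$ (interpreted as $0$ when $M=+\infty$). The plan is to combine two ingredients. The first is the identity $p*Q = 1$ from the previous lemma, which unpacked reads
\[
\sum_{j=0}^{n} p_{n-j}\,Q_j \;=\; 1 \qquad (n \ge 0).
\]
The second is the hypothesis $\Delta[p]_n \to 0$, which I will use as a ``slow variation'' statement: finite index shifts of $p$ are invisible in the limit.

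The first step is a translation principle: for any fixed $j \ge 0$ and any subsequence $(n_k)$ along which $p_{n_k}$ converges to some $\alpha$, one also has $p_{n_k-j} \to \alpha$, because
\[
|p_{n_k} - p_{n_k - j}| \;\le\; \sum_{i=0}^{j-1} |\Delta[p]_{n_k - i}| \;\longrightarrow\; 0.
\]
Since $0 \le p_n \le 1$, I can pick a subsequence $(n_k)$ realising $L^* := \limsup_n p_n$. The remaining analysis splits on whether $M$ is finite.

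If $M < \infty$, the previous lemma gives $\sum_j Q_j = M < \infty$, and $0 \le p_{n_k - j} Q_j \le Q_j$. Dominated convergence for series, together with the translation principle, therefore yields
\[
1 \;=\; \sum_{j=0}^{n_k} p_{n_k - j} Q_j \;\longrightarrow\; \sum_{j=0}^\infty L^* Q_j \;=\; L^* M,
\]
forcing $L^* = 1/M$; the same argument applied to a subsequence realising $L_* := \liminf_n p_n$ gives $L_* = 1/M$, so $p_n \to 1/M = L$. If $M = +\infty$, I truncate instead: for each fixed $N$ and all $k$ with $n_k \ge N$,
\[
1 \;\ge\; \sum_{j=0}^{N} p_{n_k - j} Q_j \;\longrightarrow\; L^* \sum_{j=0}^{N} Q_j \qquad (k \to \infty).
\]
Letting $N \to \infty$, a positive $L^*$ would make the right-hand side diverge, contradicting the bound $1$; hence $L^* = 0$, and combined with $p_n \ge 0$ this gives $p_n \to 0 = L$.

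The only step needing any care is the exchange of limit and summation in the $M<\infty$ case, but the uniform bound $p_{n_k - j}Q_j \le Q_j$ together with summability of $Q$ makes this a direct application of dominated convergence on the counting measure; the $M=+\infty$ case reduces to elementary truncation. The genuinely crucial ingredient is therefore the translation principle, which is itself a one-line consequence of $\Delta[p]_n \to 0$ and boundedness of $p$, so I expect no substantive obstacle.
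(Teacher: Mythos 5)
Your proof is correct and follows essentially the same route as the paper: both rest on the identity $p*Q=1$ and on the observation that $\lim_n \Delta[p]_n=0$ makes finite index shifts of $p$ invisible in the limit, together with the boundedness $0\le p_n\le 1$. You package the limit interchange as subsequence extraction plus dominated convergence (with elementary truncation when $\sum_i iq_i=+\infty$), whereas the paper carries out the same truncation estimates directly on $\limsup_n p_n$ and $\liminf_n p_n$; the mathematical content is identical.
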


\begin{proof}
STEP1: Let $\ep>0$ and $M\geq 1$ be arbitrary. 
Since $\lim_n (p_n-p_{n-1})=0$, we have $\lim_n (p_n-p_{n-2})=\lim_n ((p_n-p_{n-1})+(p_{n-1}-p_{n-2}))=0+0$. 
Inductively, we have $\lim_n (p_n-p_{n-i})=0$ for any $i\geq 0$. 
Then, for any $0\leq i\leq M$, there exists $n_i\geq i$ such that $|p_n-p_{n-i}|<\ep~(n\geq n_i)$. 
Let $N:=\max\mk{M,n_0,\cdots,n_M}$. Then $|p_n-p_{n-i}|<\ep~(n\geq N,~0\leq i\leq M)$. 
Since $p*Q=1$, we have $\sum_{i=0}^n Q_ip_{n-i}=1~(n\geq 0)$. 
In particular, $\sum_{i=0}^M Q_ip_{n-i}\leq \sum_{i=0}^n Q_ip_{n-i}=1~(n\geq N)$. 
Since $p_n-p_{n-i}\leq \ep~(n\geq N,~0\leq i\leq M)$, we have 
$\sum_{i=0}^M Q_i(p_n-\ep)\leq \sum_{i=0}^M Q_ip_{n-i}\leq 1~(n\geq N)$, i.e., 
$p_n-\ep\leq 1/\sum_{i=0}^M Q_i~(n\geq N)$. Then $\limsup_n p_n-\ep\leq 1/\sum_{i=0}^M Q_i$. 
Since $\ep>0$ is arbitrary, we have $\limsup_n p_n\leq 1/\sum_{i=0}^M Q_i$. 
Taking $M\to +\infty$, we obtain $\limsup_n p_n\leq 1/\sum_{i=0}^\infty Q_i$. \\
STEP2: In this step, assume that $\sum_{i=0}^\infty Q_i<+\infty$. 
Let $\ep>0$ and $M\geq 1$ be arbitrary. 
By the same argument as above, there exists $N\geq M$ such that 
$|p_n-p_{n-i}|<\ep~(n\geq N,~0\leq i\leq M)$. 
Since $\sum_{i=0}^n Q_ip_{n-i}=1~(n\geq 0)$, we have
$1=\sum_{i=0}^n Q_ip_{n-i}=\sum_{i=0}^M Q_ip_{n-i}+\sum_{i=M+1}^n Q_ip_{n-i}
\leq \sum_{i=0}^M Q_ip_{n-i}+\sum_{i=M+1}^n Q_i
\leq \sum_{i=0}^M Q_ip_{n-i}+\sum_{i=M+1}^\infty Q_i~(n\geq N)$, i.e., 
$1-\sum_{i=M+1}^\infty Q_i\leq \sum_{i=0}^M Q_ip_{n-i}~(n\geq N)$. 
Since $p_n-p_{n-i}\geq -\ep~(n\geq N,~0\leq i\leq M)$, we have 
$\sum_{i=0}^M Q_ip_{n-i}\leq \sum_{i=0}^M Q_i(p_n+\ep)~(n\geq N)$, so 
$1-\sum_{i=M+1}^\infty Q_i\leq \sum_{i=0}^M Q_i(p_n+\ep)~(n\geq N)$, i.e., 
$(1-\sum_{i=M+1}^\infty Q_i)/\sum_{i=0}^M Q_i\leq p_n+\ep~(n\geq N)$. 
Then $(1-\sum_{i=M+1}^\infty Q_i)/\sum_{i=0}^M Q_i\leq \liminf_n p_n+\ep$. 
Since $\ep>0$ is arbitrary, we have $(1-\sum_{i=M+1}^\infty Q_i)/\sum_{i=0}^M Q_i\leq \liminf_n p_n$. 
Taking $M\to +\infty$, we obtain $1/\sum_{i=0}^\infty Q_i\leq \liminf_n p_n$. \\
STEP3:~If $\sum_{i=0}^\infty Q_i<+\infty$, then we have 
$1/\sum_{i=0}^\infty Q_i\leq \liminf_n p_n\leq \limsup_n p_n\leq 1/\sum_{i=0}^\infty Q_i$ 
by STEP1 and STEP2. This implies $\lim_n p_n=1/\sum_{i=0}^\infty Q_i$. 
If $\sum_{i=0}^\infty Q_i=+\infty$, then we have $\limsup_n p_n\leq 1/\sum_{i=0}^\infty Q_i=0$ by STEP1, 
so $\lim_n p_n=0=1/\sum_{i=0}^\infty Q_i$. Thus we obtain $\lim_n p_n=1/\sum_{i=0}^\infty Q_i$ in any cases. 
Since $\sum_{i=0}^\infty Q_i=\sum_{i=0}^\infty iq_i$, we complete the proof. 
\end{proof}

\begin{remark}
The idea behind the above proof is to use Tauberian theorems for almost convergent sequences. 
For simplicity, assume that $\sum_{i=0}^\infty Q_i=+\infty$. 
Let $L$ be a banach limit on $l^\infty(\Zp;\R)$. 
Let $M\geq 1$. Since $\sum_{i=0}^M Q_ip_{n-i}\leq 1~(n\geq M)$, we have 
$\sum_{i=0}^M Q_iL(p)\leq 1$, i.e., $L(p)\leq 1/\sum_{i=0}^M Q_i$. 
Since $M\geq 1$ is arbitrary, we have $L(p)\leq 1/\sum_{i=0}^\infty Q_i=0$. 
Moreover, since $p\geq 0$, we have $L(p)\geq 0$. Thus we obtain $L(p)=0$. 
Since $L$ is arbitrary, it follows that $p$ is an almost convergent sequence 
with $AC\lim_n p_n=0$. 
Since $\lim_n (p_n-p_{n-1})=0$ is a Tauberian condition for almost convergent sequences 
(e.g., \cite{lorentz1948contribution}), we obtain $\lim_n p_n=0~(=1/\sum_{i=0}^\infty Q_i)$. 
This argument itself is also a proof of Lemma \ref{dep_lemm}, 
and the above elementary proof is just a simplification of this argument. 
\end{remark}

\begin{lemma}\label{a_lemm}
If $a:\Zp\to \R$ is bounded and $\lim_n \Delta^2[a]_n=0$, then $\lim_n \Delta[a]_n=0$. 
\end{lemma}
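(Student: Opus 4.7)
Set $b_n := \Delta[a]_n$, so for $n\geq 1$ we have $b_n - b_{n-1} = \Delta^2[a]_n$, and the hypothesis becomes the ``slow oscillation'' condition $b_n - b_{n-1}\to 0$. Since $a$ is bounded, say $|a_n|\leq C$, the telescoping identity $a_n = a_0 + \sum_{i=1}^n b_i$ keeps every partial sum of $\{b_i\}$ uniformly bounded. The goal is to conclude $b_n\to 0$.

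I plan to argue by contradiction. Replacing $a$ by $-a$ if needed, suppose $\limsup_n b_n = 2\alpha > 0$. For a small $\epsilon>0$ to be fixed later, pick $N$ with $|b_n - b_{n-1}|<\epsilon$ for all $n\geq N$, and then pick $n_0\geq N$ with $b_{n_0}>\alpha$. Slow oscillation propagates positivity along a block: for $n_0\leq n\leq n_0+m$, telescoping $b_n - b_{n_0} = \sum_{j=n_0+1}^n (b_j-b_{j-1})$ yields $|b_n - b_{n_0}|\leq m\epsilon$, so as soon as $m\epsilon\leq \alpha/2$ one has $b_n > \alpha/2$ throughout the block.

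Taking $m := \lfloor \alpha/(2\epsilon)\rfloor$, the block has length of order $\alpha/\epsilon$, and summing gives
\[
a_{n_0+m} - a_{n_0} \;=\; \sum_{i=n_0+1}^{n_0+m} b_i \;>\; \frac{m\alpha}{2} \;\geq\; \frac{\alpha^2}{4\epsilon} - \frac{\alpha}{2}.
\]
Choosing $\epsilon$ small enough (depending only on $\alpha$ and $C$) that the right-hand side exceeds $2C$ contradicts $|a_{n_0+m} - a_{n_0}|\leq 2C$. Hence $\limsup_n b_n\leq 0$, and by the symmetric argument applied to $-a$ we get $\liminf_n b_n\geq 0$, so $\lim_n b_n = 0$.

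I do not anticipate a real obstacle: the only delicate point is the order of quantifiers — $\epsilon$ must be fixed \emph{before} $n_0$, so that the block $[n_0,n_0+m]$ automatically sits inside the slow-oscillation regime $n\geq N$. Conceptually this is just the standard Tauberian fact that slow oscillation upgrades almost convergence (which $\Delta[a]$ enjoys whenever $a$ is bounded, since $(a_{n+k}-a_k)/n\to 0$ uniformly in $k$) to ordinary convergence, in the same spirit as the remark following Lemma \ref{dep_lemm}.
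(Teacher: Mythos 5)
Your proof is correct and rests on the same two ingredients as the paper's: the telescoping identity expressing a block sum of $\Delta[a]$ as a difference of two values of $a$ (hence bounded by $2C$), and the near-constancy of $\Delta[a]$ across that block supplied by $\Delta^2[a]_n\to 0$. The only difference is packaging — the paper averages $\Delta[a]$ over a backward window of free length $M+1$ to get $\Delta[a]_n\leq 2C/(M+1)+\epsilon$ and then lets $M\to\infty$, whereas you run a contradiction on a forward block of length $\lfloor\alpha/(2\epsilon)\rfloor$ tied to $\epsilon$; both are the standard slow-oscillation Tauberian argument, and your quantifier ordering ($\epsilon$ before $n_0$) is handled correctly.
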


\begin{proof}
STEP1: We first show that $\limsup_n \Delta[a]_n\leq 0$. 
By assumption, there exists $C>0$ such that $|a_n|\leq C~(n\geq 0)$. 
Let $\ep>0$ and $M\geq 1$ be arbitrary. 
Since $\lim_n \Delta^2[a]_n=0$ and $\Delta^2[a]_n=\Delta[a]_n-\Delta[a]_{n-1}~(n\geq 1)$, we have 
$\lim_n (\Delta[a]_n-\Delta[a]_{n-1})=0$, so we can easily show that $\lim_n (\Delta[a]_n-\Delta[a]_{n-i})=0~(\forall i\geq 0)$. 
Then, for any $0\leq i\leq M$, there exists $n_i\geq i$ such that $|\Delta[a]_n-\Delta[a]_{n-i}|<\ep~(n\geq n_i)$. 
Let $N:=\max\mk{M+1,n_0,n_1,\cdots,n_M}$. Then $|\Delta[a]_n-\Delta[a]_{n-i}|<\ep~(n\geq N,~0\leq i\leq M)$. 
Then $\sum_{i=0}^M \Delta[a]_{n-i}\geq \sum_{i=0}^M (\Delta[a]_n-\ep)=(M+1)(\Delta[a]_n-\ep)~(n\geq N)$. 
Moreover, $\sum_{i=0}^M \Delta[a]_{n-i}=a_n-a_{n-M-1}\leq 2C~(n\geq N)$. 
Then $2C\geq (M+1)(\Delta[a]_n-\ep)~(n\geq N)$, so $2C/(M+1)\geq \Delta[a]_n-\ep~(n\geq N)$. 
Then $\limsup_n \Delta[a]_n-\ep\leq 2C/(M+1)$. Since $\ep>0$ is arbitrary, we have 
$\limsup_n \Delta[a]_n\leq 2C/(M+1)$. Taking $M\to +\infty$, we have $\limsup_n \Delta[a]_n\leq 0$. \\
STEP2: Let $b:=-a$. We can easily verify that $b$ is bounded and $\lim_n \Delta^2[b]_n=0$. 
By STEP1, we have $\limsup_n \Delta[b]_n\leq 0$, so $\liminf_n \Delta[a]_n\geq 0$. 
Thus we obtain $\lim_n \Delta[a]_n=0$. 
\end{proof}

\begin{remark}
The idea behind the above proof is also to use Tauberian theorems for almost convergent sequences. 
Since $a$ is bounded, we can easily show that $c:=\Delta[a]$ is an almost convergent sequence 
with $AC\lim_n c_n=0$. Since $\lim_n (c_n-c_{n-1})=\lim_n \Delta^2[a]_n=0$ is a Tauberian condition 
for almost convergent sequences, we obtain $\lim_n c_n=0$, i.e., $\lim_n \Delta[a]_n=0$. 
The above  proof is just a simplification of this Tauberian argument. 
\end{remark}

\begin{corollary}
If $\lim_n \Delta[p]_n=0$ or $\lim_n \Delta^2[p]_n=0$, 
then $\lim_n p_n=1/\sum_{i=0}^\infty iq_i$. 
\end{corollary}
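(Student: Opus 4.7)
The plan is to observe that this corollary is essentially a direct combination of the two preceding lemmas, with no new analytic work required. The key preliminary observation is that $p$ is bounded: by the basic-properties lemma in Section 2 we already know $0\leq p_n\leq 1$ for all $n\geq 0$, so $p$ is in particular a bounded real sequence to which Lemma \ref{a_lemm} applies.

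First I would dispose of the easy case. If $\lim_n \Delta[p]_n=0$, then Lemma \ref{dep_lemm} applies verbatim and yields $\lim_n p_n = 1/\sum_{i=0}^\infty iq_i$.

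Next I would reduce the second hypothesis to the first. Assume instead that $\lim_n \Delta^2[p]_n=0$. Since $p:\Zp\to\R$ is bounded by the preliminary remark above, Lemma \ref{a_lemm}, applied with $a=p$, delivers $\lim_n \Delta[p]_n=0$. I am now in the setting of the first case, so a second application of Lemma \ref{dep_lemm} completes the proof.

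There is really no obstacle here; the only thing to be slightly careful about is remembering to cite the boundedness of $p$ when invoking Lemma \ref{a_lemm}, since Lemma \ref{a_lemm} is stated as a conditional result requiring a bounded input. Both cases conclude with the same formula $\lim_n p_n=1/\sum_{i=0}^\infty iq_i$, with the usual convention that the right-hand side equals $0$ when $\sum_{i=0}^\infty iq_i=+\infty$, exactly as in Lemma \ref{dep_lemm}.
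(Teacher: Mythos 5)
Your proof is correct and matches the paper's intent exactly: the corollary is stated without an explicit proof precisely because it is the immediate combination of Lemma \ref{dep_lemm} and Lemma \ref{a_lemm} (applied to the bounded sequence $p$, using $0\leq p_n\leq 1$) that you describe. Nothing further is needed.
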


At this point, we have only to show that $\lim_n \Delta[p]_n=0$ or $\lim_n \Delta^2[p]_n=0$ 
to prove Theorem \ref{main_theo}. 

\section{Generating functions.}
In this section, we observe basic properties of generating functions of $q$, $p$, and $Q$. 

\begin{definition}
We define $B:=\mk{z\in \C\mid |z|<1}$ and $D:=\mk{z\in \C\mid |z|\leq 1}$. 
Next, for $a:\Zp\to \C$ and $z\in \C$, we define $f_a(z):=\sum_{n=0}^\infty a_nz^n$ 
if the right hand side converges in $\C$. 
\end{definition}

\begin{lemma}\label{qrc_lemm}
For $(r,\theta)\in (0,1]\times [-\pi,\pi]-\mk{(1,0)}$, 
we have $re^{i\theta}\in D-\mk{1}$ and $\sum_{n=0}^\infty q_nr^n\cos n\theta<1$. 
\end{lemma}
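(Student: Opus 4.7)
The plan is to handle the containment statement immediately, then split the inequality $\sum_{n=0}^\infty q_n r^n \cos n\theta < 1$ into the two cases $r<1$ and $r=1$ (with $\theta\neq 0$), which exploit two different hypotheses on $q$.

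First I would observe that $|re^{i\theta}|=r\in(0,1]$, so $re^{i\theta}\in D$; and $re^{i\theta}=1$ forces $r=1$ and $\theta=0$, which is excluded by assumption. This gives $re^{i\theta}\in D-\{1\}$. Next, since $q_n\geq 0$ and $r\in(0,1]$, the trivial bound $q_n r^n \cos n\theta\leq q_n r^n\leq q_n$ shows that the series converges absolutely and is at most $\sum_{n=0}^\infty q_n=1$. The task is to promote this to a strict inequality.

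For the case $r<1$: because $q_0=0$ and $\sum_{n=0}^\infty q_n=1$, there exists some $n\geq 1$ with $q_n>0$, and for such $n$ we have $q_n r^n<q_n$; hence $\sum_{n=0}^\infty q_n r^n<1$, and a fortiori $\sum_{n=0}^\infty q_n r^n\cos n\theta<1$. For the case $r=1$ and $\theta\neq 0$: suppose by contradiction that $\sum_{n=0}^\infty q_n\cos n\theta=1$. Subtracting from $\sum_{n=0}^\infty q_n=1$ gives $\sum_{n=0}^\infty q_n(1-\cos n\theta)=0$, and since every term is nonnegative we conclude $q_n(1-\cos n\theta)=0$ for every $n\geq 0$. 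Thus every $n\geq 1$ with $q_n\neq 0$ satisfies $n\theta\in 2\pi\mathbb{Z}$.

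Now the gcd hypothesis enters: since $\gcd\{n\geq 1\mid q_n\neq 0\}=1$, I can select a finite subset $\{n_1,\dots,n_k\}$ of this set whose gcd is already $1$, and by Bezout obtain integers $c_1,\dots,c_k$ with $\sum_{j=1}^k c_j n_j=1$. Then $\theta=\sum_{j=1}^k c_j(n_j\theta)\in 2\pi\mathbb{Z}$, and combined with $\theta\in[-\pi,\pi]$ this forces $\theta=0$, contradicting the standing assumption $\theta\neq 0$. The main (minor) obstacle is precisely this last step: the passage from the pointwise equalities $n\theta\in 2\pi\mathbb{Z}$ (for all $n$ in a possibly infinite set) to $\theta\in 2\pi\mathbb{Z}$, which is where the reduction to a finite gcd-one subset is needed and where the hypothesis $\gcd\{n\geq 1\mid q_n\neq 0\}=1$ is used essentially.
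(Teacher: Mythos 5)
Your proof is correct, and it is precisely the argument the paper has in mind: the paper's ``proof'' of this lemma is the single sentence that the claim follows from $\gcd\{n\geq 1\mid q_n\neq 0\}=1$, and your case split ($r<1$ via $q_nr^n<q_n$ for some $n\geq 1$ with $q_n>0$; $r=1$, $\theta\neq 0$ via $\sum_n q_n(1-\cos n\theta)=0$, reduction to a finite gcd-one subset, and Bezout) supplies exactly the details being omitted. No gaps.
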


\begin{proof}
We can show this fact by using $\gcd(\mk{n\geq 1\mid q_n\neq 0})=1$. 
\end{proof}

\begin{lemma}\label{pqQ_lemm}
We have the following. \\
(i) $f_{\Delta^l[p]}(z)~(l\geq 0)$ is well-defined for $z\in B$, and continuous on $B$. \\
(ii) $f_q(z)$ is well-defined for $z\in D$, and continuous on $D$. 
Moreover, $f_p(z)(1-f_q(z))=1~(z\in B)$, $f_q(1)=1$, and $f_q(z)\neq 1~(z\in D-\mk{1})$. \\
(iii) For $(r,\theta)\in (0,1]\times [-\pi,\pi]-\mk{(1,0)}$, we have 
\begin{align}
\ak{\fr{(1-re^{i\theta})^2}{1-f_q(re^{i\theta})}}
\leq \fr{1+r^2-2r\cos\theta}{1-\sum_{n=0}^\infty q_nr^n\cos n\theta}. 
\end{align}
(iv) If $\sum_{i=0}^\infty Q_i<+\infty$, 
then $f_Q(z)$ is well-defined for $z\in D$, and continuous on $D$. 
Moreover, $(1-z)f_Q(z)=1-f_q(z)~(z\in D)$ and $f_Q(z)\neq 0~(z\in D)$. 
\end{lemma}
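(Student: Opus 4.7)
The plan is to handle the four parts in order, since they build on each other. For (i), since $|p_n| \leq 1$, a straightforward induction on $l$ shows $|\Delta^l[p]_n| \leq 2^l$ for all $n$, so $\sum_n \Delta^l[p]_n z^n$ converges absolutely and uniformly on compact subsets of $B$; this makes $f_{\Delta^l[p]}$ analytic, hence continuous, on $B$. For (ii), the Weierstrass $M$-test with majorant $q_n$ immediately gives that $f_q$ is well-defined and continuous on $D$, and evaluating at $z=1$ gives $f_q(1) = \sum_n q_n = 1$. The identity $f_p(z)(1-f_q(z)) = 1$ on $B$ follows by applying generating functions to $p = I + p*q$ and using the Cauchy product, which is justified by absolute convergence when $|z|<1$.

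The heart of (ii) is showing $f_q(z) \neq 1$ for $z \in D - \{1\}$. I would write $z = re^{i\theta}$ with $r \in (0,1]$, $\theta \in [-\pi,\pi]$, $(r,\theta) \neq (1,0)$, and apply Lemma \ref{qrc_lemm} to obtain
\[
\mathrm{Re}(1 - f_q(re^{i\theta})) = 1 - \sum_{n=0}^\infty q_n r^n \cos n\theta > 0,
\]
which rules out $f_q(z) = 1$. The remaining case $z = 0$ is immediate since $f_q(0) = q_0 = 0$.

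Part (iii) is then essentially a one-line observation: since $|(1-re^{i\theta})^2| = |1-re^{i\theta}|^2 = 1 + r^2 - 2r\cos\theta$ is exactly the numerator on the right, the inequality reduces to the elementary bound $|1 - f_q(re^{i\theta})| \geq \mathrm{Re}(1 - f_q(re^{i\theta})) = 1 - \sum_n q_n r^n \cos n\theta$, which is positive by Lemma \ref{qrc_lemm}, so the displayed fraction is unambiguous.

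For (iv), under $\sum_i Q_i < \infty$, the Weierstrass $M$-test again makes $f_Q$ well-defined and continuous on $D$. The key identity $(1-z)f_Q(z) = 1 - f_q(z)$ comes from telescoping: expanding $(1-z)\sum_n Q_n z^n$ and reindexing gives $Q_0 + \sum_{n\geq 1}(Q_n - Q_{n-1})z^n = 1 - \sum_{n\geq 1} q_n z^n = 1 - f_q(z)$, using $Q_0 = 1$ and $Q_{n-1} - Q_n = q_n$. Non-vanishing of $f_Q$ on $D$ then follows in two cases: $f_Q(1) \geq Q_0 = 1 > 0$, while for $z \in D - \{1\}$ both $1 - z$ and $1 - f_q(z)$ are nonzero by (ii), forcing $f_Q(z) \neq 0$. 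The only genuinely nontrivial step throughout is the non-vanishing in (ii), which rests entirely on the aperiodicity assumption $\gcd\{n : q_n \neq 0\} = 1$ encoded in Lemma \ref{qrc_lemm}; the rest is routine generating-function manipulation together with the inequality $|w| \geq \mathrm{Re}(w)$.
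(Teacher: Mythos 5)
Your proposal is correct and, for part (iii) --- the only part the paper proves in detail --- uses exactly the paper's argument: $|1-f_q(re^{i\theta})| \geq \mathrm{Re}(1-f_q(re^{i\theta})) = 1-\sum_n q_n r^n\cos n\theta > 0$ by Lemma \ref{qrc_lemm}, combined with $|1-re^{i\theta}|^2 = 1+r^2-2r\cos\theta$. The paper dismisses (i), (ii), (iv) as easy, and your fill-ins (the bound $|\Delta^l[p]_n|\leq 2^l$, the Cauchy-product derivation of $f_p(1-f_q)=1$, the separate treatment of $z=0$, the telescoping identity $(1-z)f_Q=1-f_q$, and the two-case non-vanishing of $f_Q$) are all sound and are the intended routine arguments.
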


\begin{proof}
It is easy to show (i), (ii), and (iv). We only show (iii). 
Let $(r,\theta)\in (0,1]\times [-\pi,\pi]-\mk{(0,1)}$. 
Let $a=1-\sum_{n=0}^\infty q_nr^n \cos n\theta$ and 
$b=\sum_{n=0}^\infty q_nr^n\sin n\theta$. By Lemma \ref{qrc_lemm}, we have $a>0$. 
Moreover, we have $1-f_q(re^{i\theta})=a-ib$, so $|1/(1-f_q(re^{i\theta}))|=1/(a^2+b^2)^{1/2}\leq 1/a$. 
Thus $|(1-re^{i\theta})^2/(1-f_q(re^{i\theta}))|\leq |(1-re^{i\theta})^2|/a$. 
Since $|(1-re^{i\theta})^2|=|1-re^{i\theta}|^2=\cdots=1+r^2-2r\cos\theta$, 
we complete the proof. 
\end{proof}

\begin{lemma}\label{fourier_lemm}
For any $r\in (0,1)$ and $m,l\geq 0$, we have 
\begin{align*}
2\pi \Delta^l[p]_mr^m
=\int_{-\pi}^\pi \fr{(1-re^{i\theta})^l}{1-f_q(re^{i\theta})}e^{-im\theta}d\theta. 
\end{align*}
\end{lemma}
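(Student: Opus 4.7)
The key identity is the Fourier-coefficient extraction formula on the circle $|z|=r$, applied to the generating function of $\Delta^l[p]$. The plan is to first identify $f_{\Delta^l[p]}(z)$ with $(1-z)^l/(1-f_q(z))$ on $B$, then recover $\Delta^l[p]_m r^m$ by orthogonality of $\{e^{in\theta}\}$.

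\textbf{Step 1: Compute $f_{\Delta^l[p]}$.} For any sequence $a:\Zp\to\C$ for which the relevant series converge in $B$, a direct rearrangement gives $f_{\Delta[a]}(z)=(1-z)f_a(z)$, since
\begin{align*}
\sum_{n=0}^\infty \Delta[a]_n z^n = a_0+\sum_{n=1}^\infty(a_n-a_{n-1})z^n = f_a(z)-zf_a(z).
\end{align*}
Iterating $l$ times yields $f_{\Delta^l[p]}(z)=(1-z)^l f_p(z)$ on $B$. Combined with $f_p(z)(1-f_q(z))=1$ from Lemma \ref{pqQ_lemm}(ii), this gives
\begin{align*}
\frac{(1-z)^l}{1-f_q(z)} = f_{\Delta^l[p]}(z) = \sum_{n=0}^\infty \Delta^l[p]_n z^n \qquad (z\in B).
\end{align*}

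\textbf{Step 2: Fourier extraction.} Since $0\leq p_n\leq 1$, an easy induction on $l$ shows $\Delta^l[p]$ is bounded (in absolute value by $2^l$). Hence for fixed $r\in(0,1)$, the series $\sum_{n=0}^\infty \Delta^l[p]_n r^n e^{in\theta}$ converges uniformly in $\theta\in[-\pi,\pi]$. Substituting $z=re^{i\theta}$ and integrating termwise,
\begin{align*}
\int_{-\pi}^\pi \frac{(1-re^{i\theta})^l}{1-f_q(re^{i\theta})}\,e^{-im\theta}\,d\theta
= \sum_{n=0}^\infty \Delta^l[p]_n r^n \int_{-\pi}^\pi e^{i(n-m)\theta}\,d\theta
= 2\pi\,\Delta^l[p]_m r^m,
\end{align*}
using $\int_{-\pi}^\pi e^{ik\theta}\,d\theta=2\pi\delta_{k,0}$.

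\textbf{Main obstacle.} There is no real obstacle here: the formula is an instance of the well-known recovery of power-series coefficients via Fourier integration on a circle strictly inside the disk of convergence, and the boundedness of $\Delta^l[p]$ (which legitimizes termwise integration at radius $r<1$) is immediate from $0\leq p_n\leq 1$. The only care needed is to confirm the generating-function identity $f_{\Delta^l[p]}(z)=(1-z)^l/(1-f_q(z))$, which follows by iterating the elementary shift relation and invoking Lemma \ref{pqQ_lemm}(ii).
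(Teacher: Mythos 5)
Your proof is correct and follows exactly the route the paper intends: establish $f_{\Delta^l[p]}(z)=(1-z)^l f_p(z)=(1-z)^l/(1-f_q(z))$ on $B$ and then extract the coefficient by orthogonality on the circle $|z|=r<1$. The paper leaves these details as "easily verified"; your write-up simply supplies them, including the justification of termwise integration via boundedness of $\Delta^l[p]$.
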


\begin{proof}
This is easily verified by using $f_{\Delta^l[p]}(z)=(1-z)^lf_p(z)=(1-z)^l/(1-f_q(z))~(z\in B)$. 
\end{proof}

\section{Proof of Theorem \ref{main_theo} (finite case).}

\begin{proof}
Assume that $\sum_{i=0}^\infty iq_i<+\infty$. 
We would like to show Theorem \ref{main_theo}. 
As we have already mentioned, it suffices to show that $\lim_n \Delta[p_n]=0$. 
First, since $\sum_{i=0}^\infty iq_i=\sum_{i=0}^\infty Q_i$, 
we have $\sum_{i=0}^\infty Q_i<+\infty$. By Lemma \ref{pqQ_lemm}, we have 
\begin{itemize}
\item $f_Q(z)$ is well-defined for $z\in D$, and continuous on $D$. 
Moreover, $(1-z)f_Q(z)=1-f_q(z)~(z\in D)$ and $f_Q(z)\neq 0~(z\in D)$. 
\end{itemize}
In particular, $1/f_Q(z)$ is well-defined for $z\in D$, and continuous on $D$. 
Next, by Lemma \ref{fourier_lemm} and $(1-z)f_Q(z)=1-f_q(z)~(z\in D)$, we have 
\begin{align*}
2\pi \Delta[p]_mr^m
=\int_{-\pi}^\pi \fr{1}{f_Q(re^{i\theta})}e^{-im\theta}d\theta~(0<r<1,~m\geq 0). 
\end{align*}
For any fixed $m\geq 0$, we would like to take $r\uparrow 1$. 
Since $1/f_Q(z)$ is continuous on $z\in D$, this is uniformly continuous on $D$. 
Then we have $\lim_{r\uparrow 1}e^{-im\theta}/f_Q(re^{i\theta})
=e^{-im\theta}/f_Q(e^{i\theta})$ uniformly with respect to $\theta\in [-\pi,\pi]$. Then \\
$\lim_{r\uparrow 1}\int_{-\pi}^\pi e^{-im\theta}/f_Q(re^{i\theta})d\theta
=\int_{-\pi}^\pi e^{-im\theta}/f_Q(e^{i\theta})d\theta$. 
Thus we obtain 
\begin{align*}
2\pi \Delta[p]_m=\int_{-\pi}^\pi \fr{1}{f_Q(e^{i\theta})}e^{-im\theta}d\theta~(m\geq 0). 
\end{align*}
Since $1/f_Q(e^{i\theta})$ is continuous with respect to $\theta\in [-\pi,\pi]$, 
we can apply Riemann-Lebesgue Lemma for continuous functions, so 
$\lim_m \int_{-\pi}^\pi e^{-im\theta}/f_Q(e^{i\theta})d\theta=0$, i.e, 
$\lim_m \Delta[p]_m=0$. Thus we obtain Theorem \ref{main_theo}, provided that $\sum_{i=0}^\infty iq_i<+\infty$.  
\end{proof}

\section{Proof of Theorem \ref{main_theo} (infinite case).}

\begin{proof}
Assume that $\sum_{i=0}^\infty iq_i=+\infty$. 
We would like to show Theorem \ref{main_theo}. 
As we have already mentioned, it suffices to show that $\lim_n \Delta^2[p_n]=0$. \\
STEP1: We define $G:\R\to \Rp$ as $G(x):=(1-\cos x)/x^2~(x\neq 0),~1/2~(x=0)$. 
Note that $G$ is continuous on $\R$. We also have $x^2G(x)=1-\cos x~(\forall x\in \R)$. \\
STEP2: Let $(r,\theta)\in (0,1]\times [-\pi,\pi]-\mk{(1,0)}$. 
Since $|\sin t|\leq |t|~(t\in \R)$, we have 
$1+r^2-2r\cos\theta=(1-r)^2+2r(1-\cos\theta)=(1-r)^2+4r\sin^2(\theta/2) \leq (1-r)^2+r\theta^2\leq (1-r)+\theta^2$. 
Moreover, we have 
$0<1-\sum_{n=0}^\infty q_nr^n\cos n\theta
=\sum_{n=0}^\infty q_n-\sum_{n=0}^\infty q_nr^n\cos n\theta
=\sum_{n=0}^\infty q_n(1-r^n)+\sum_{n=0}^\infty q_nr^n(1-\cos n\theta)
=(1-r)\sum_{n=0}^\infty q_n(1+r+\cdots+r^{n-1})+\theta^2\sum_{n=0}^\infty n^2q_nr^nG(n\theta)$. 
By (iii) of Lemma \ref{pqQ_lemm}, for any $(r,\theta)\in (0,1]\times [-\pi,\pi]-\mk{(1,0)}$, we obtain 
\begin{align*}
&\ak{\fr{(1-re^{i\theta})^2}{1-f_q(re^{i\theta})}}
\leq \fr{(1-r)+\theta^2}
{(1-r)\sum_{n=0}^\infty q_n(1+r+\cdots+r^{n-1})+\theta^2\sum_{n=0}^\infty n^2q_nr^nG(n\theta)}. 
\end{align*}
For any $M\geq 1$, we have $\sum_{n=0}^\infty q_n(1+r+\cdots+r^{n-1})\geq \sum_{n=0}^M q_n(1+r+\cdots+r^{n-1})$, 
so $\liminf_{(r,\theta)\to (1,0)}\sum_{n=0}^\infty q_n(1+r+\cdots+r^{n-1})\geq \sum_{n=0}^M nq_n$. 
Taking $M\to +\infty$, we obtain 
$\liminf_{(r,\theta)\to (1,0)}\sum_{n=0}^\infty q_n(1+r+\cdots+r^{n-1}\geq \sum_{n=0}^\infty nq_n=+\infty$. 
Similarly, we have $\liminf_{(r,\theta)\to (1,0)}\sum_{n=0}^\infty n^2q_nr^nG(n\theta)\geq 
(1/2)\sum_{n=0}^\infty n^2q_n\geq (1/2)\sum_{n=0}^\infty nq_n=+\infty$. Then we have 
\begin{align}\label{main_theo_pr1}
\lim_{(r,\theta)\to (1,0)}\ak{\fr{(1-re^{i\theta})^2}{1-f_q(re^{i\theta})}}=0. 
\end{align}
STEP3: We define $H:(0,1]\times [\pi,\pi]\to \C$ as follows: 
\begin{align*}
H(r,\theta):=\fr{(1-re^{i\theta})^2}{1-f_q(re^{i\theta})}~((r,\theta)\neq (1,0)),~0~((r,\theta)=(1,0)). 
\end{align*}
Then $H$ is continuous, i.e., 
$\lim_{(r,\theta)\to (r',\theta')}H(r,\theta)=H(r',\theta')~(\forall (r',\theta')\in (0,1]\times [\pi,\pi])$. 
This is easily verified for $(r',\theta')\neq (1,0)$. We also have $\lim_{(r,\theta)\to (1,0)}H(r,\theta)=H(1,0)$ 
by (\ref{main_theo_pr1}). \\
STEP4: In view of Lemma \ref{fourier_lemm} and the definition of $H$, we have 
\begin{align*}
2\pi \Delta^2[p]_mr^m=\int_{-\pi}^\pi H(r,\theta)e^{-im\theta}d\theta~(0<r<1,~m\geq 0). 
\end{align*}
For any fixed $m\geq 0$, we would like to take $r\uparrow 1$. 
Since $H(r,\theta)$ is continuous on 
$(0,1]\times [-\pi,\pi]$, this is uniformly continuous on 
$[1/2,1]\times [-\pi,\pi]$. Then we have 
$\lim_{r\uparrow 1}H(r,\theta)=H(1,\theta)$ uniformly with respect to $\theta\in [-\pi,\pi]$. Hence 
\begin{align*}
2\pi \Delta^2[p]_m=\int_{-\pi}^\pi H(1,\theta)e^{-im\theta}d\theta~(m\geq 0). 
\end{align*}
Since $H(1,\theta)$ is continuous on $\theta\in [-\pi,\pi]$, we can apply 
Riemann-Lebesgue Lemma for continuous functions, and we obtain $\lim_m \Delta^2[p]_m=0$. 
Thus we complete the proof of Theorem \ref{main_theo}. 
\end{proof}

\noindent
{\bf Acknowledgment.} \\
This research did not receive any specific grant from funding agencies 
in the public, commercial, or not-for-profit sectors. 

\noindent 
Declaration of interest statement: none. 

\bibliographystyle{plain}
\bibliography{myref}


\vfill\eject

\end{document}